\documentclass[a4paper]{article}

\usepackage{a4wide}
\usepackage{amsmath,amssymb,amsthm}
\usepackage{mathrsfs,bbm}
\usepackage[american]{babel}
\usepackage[dvips]{graphicx}
\usepackage{array,pifont,varioref,framed}

\graphicspath{{./}{figure/}}

\title{Parameter estimation of monomial-exponential sums}
\author{Luisa Fermo, Cornelis Van der Mee and Sebastiano Seatzu \\
		{\small\it Department of Mathematics and Computer Science} \\
		{\small\it University of Cagliari} \\
		{\small\it Viale Merello 92, 09123 Cagliari, Italy} \\[5mm]
	   }
\date{}

\newcommand{\RR}{\mathbb{R}}

\theoremstyle{plain}\newtheorem{theorem}{Theorem}[section]
\theoremstyle{remark}
\theoremstyle{plain}\newtheorem{lemma}[theorem]{Lemma}
\theoremstyle{plain}
\theoremstyle{definition}

\begin{document}
\maketitle

\begin{abstract}
We propose a numerical method, based upon matrix-pencils, for the identification of parameters and coefficients of a monomial-exponential sum. We note that this method can be considered an extension of the numerical methods for the parameter estimation of exponential sums. The application of the method is applied to several examples, some already present in the literature and others, to our knowledge, never considered before.

\medskip

\noindent{\bf Keywords:} Nonlinear approximation, parameter estimation, matrix pencils.

\medskip

\noindent{\bf Mathematics Subject Classification:} 41A46, 15A22, 65F15. 
\end{abstract}
\section{Introduction}
Denoting by  $n$ and $\{m_j\}_{j=1}^n$ positive integers, let us consider the following monomial-exponential sum
\begin{equation}\label{hx}
h(x)=\sum_{j=1}^{n} \sum_{s=0}^{m_j-1} c_{js} x^s e^{f_jx},
\end{equation}
where $\{c_{js}\}_{j=1,s=0}^{n,m_j-1}$ and $\{f_j\}_{j=1}^n$ are  complex or real  parameters with $f_j \neq 0$, which reduces to a linear combination of exponentials in the case $m_1=m_2=\dots=m_n=1$. Setting $$M=m_1+m_2+\dots+m_n,$$ we want to recover all  parameters of $h$ given $2N$ ($N \geq M$) observed data.
This problem has many applications in science and engineering. For instance, it arises in  propagation of signals \cite{Papy2005}, electromagnetics \cite{Attiya2011} and high-resolution imaging of moving targets \cite{Hua1994}, as well as in the direct scattering problem concerning the solution of the class of nonlinear partial differential equation of integrable type (see Subsection \ref{applications}).

In the literature there exist several approaches to solve this problem, in the case of exponential sums. The methods used most are  Prony-like (or polynomial) methods and  matrix-pencil methods.
The first ones are based on the paper by  G. de Prony \cite{Prony1795} who 
was the first to investigate this problem. He proposed a quite efficient and accurate approach for extracting parameters under the hypothesis that $n$ is known, $m_j \equiv 1$ and the observed data are exact. This method  was principally based on the solution of two linear systems characterized by a  Hankel and a Vandermonde matrix, respectively. The first system furnishes the coefficients of a polynomial (the so-called Prony polynomial) whose roots allow one to determine the parameters $f_j$, while the second system provides the coefficients $c_{js}$. Several extensions have been proposed (see, for instance, \cite[pp. 458-462]{Hildebrand1956}, \cite{Mittra1975}, \cite{Mittra1978}  \cite{Beylkin2005} and more recently in \cite{Potts2010} and \cite{Potts2011}) to apply this polynomial method, also in the case where $n$ is only approximately known or $m_j \neq 1$ or the data are affected by noise. The matrix-pencil technique has been developed more recently \cite{Hua1990}. As the Prony-like methods, one recovers the coefficients $c_{js}$ by solving a Vandermonde system but (see, for instance, \cite{Pereira1995}) the computation of the parameters $f_j$ is reduced to only one step. In fact, it allows one to estimate the zeros of the Prony polynomial and then $f_j$ without passing through the computation of its coefficients.
This is the main difference with the Prony-like methods, which makes this kind of method more computationally efficient.

More recently, for exponential sums and for noiseless sampled data,  a close connection between the methods mentioned above has been proposed in \cite{Potts2013}, which allows one to obtain a unified approach in the case where an approximate  upper bound $\widehat{M}$ of $n$ is given.  In this context  two algorithms have been proposed \cite{Potts2013}, respectively based on a $QR$ factorization and on the singular value decomposition of a rectangular Hankel matrix. This second technique makes it equivalent to the 
ESPIRIT (Estimation of Signal Parameters via Rotational Invariance Techniques) method (see, for instance, \cite{Roy1990}). 

In this paper we propose a new matrix-pencil method which allows one to solve the problem in the more general case of monomial-exponential sums also in the presence of noisy data and under the hypothesis that we know a reasonable upper bound of $M$.

As usual in the Prony-like methods, first we introduce the Prony polynomial, namely a monic polynomial of degree $M$ having $z_j$ as its $j$th zero with multiplicity $m_j$, and then we  arrange the data in two square Hankel matrices of order $N$.  By using difference equation theory, we state some important properties of these matrices which are basic to our method. We introduce a matrix-pencil and prove that the parameters $f_j$ we are looking for are exactly the generalized eigenvalues of this special matrix which we compute by resorting to the Generalized Singular Value Decomposition \cite{Golub1996}. Finally, we solve an overdetermined system with a Casorati matrix to recover the coefficients $c_{js}$.

The paper is organized as follows. In Section \ref{method1}, we illustrate our method, assuming  $M=m_1+m_2+\dots+m_n$ exactly known. In Section \ref{method2} we explain what changes are needed if we do not know exactly $M$  but only an  upper bound. Section \ref{tests} is devoted to the results of our numerical experimentation, while conclusions follow in Section \ref{conclusions}.
 
\section{The numerical method}\label{method1}
In this section we present the numerical method we propose to recover all parameters appearing in the monomial-exponential sum \eqref{hx}.
More precisely, we reduce the non-linear approximation problem to two  problems of linear algebra. The first one is a generalized eigenvalue problem, which allows us to recover $n$, $z_j$ and $m_j$. The second one is the solution of a linear system with a Casorati matrix to compute the parameters $c_{js}$.

Firstly we note that, 
setting $z_j=e^{f_j} \neq 0$, we can rewrite the monomial exponential sum \eqref{hx}  as a monomial-power sum
\begin{equation}\label{hx1}
h(x)=\sum_{j=1}^{n} \sum_{s=0}^{m_j-1} c_{js} x^s z_j^x.
\end{equation}
Moreover, let $M=n_1+\,\dots\,+n_n$ and  assume that  $2N$ sampled data with $N>M$
\begin{equation}\label{hk1}
h(k)=\sum_{j=1}^{n} \sum_{s=0}^{m_j-1} c_{js} k^s z_j^k, \quad 0^0 \equiv 1 
\end{equation}
are given for the $2N$  values $k=k_0,k_0+1,\dots, k_0+2N-1$ with $k_0 \in \mathbb{N}^+=\{0,1,2,...,k_0,... \}$.
Preliminary, we arrange the $2N$ given data in the following square Hankel matrices of order $N$

\begin{align}\label{Hankel0}
{\bf{H}}_{NN}^{k_0}=
\left(\begin{matrix}
h(k_0) & h(k_0+1) & \dots & h(k_0+N-1) \\
h(k_0+1) & h(k_0+2) & \dots & h(k_0+N) \\
\vdots & \vdots & \vdots & \vdots \\
h(k_0+N-1) & h(k_0+N) & \dots & h(k_0+2N-2) 
\end{matrix}\right)=[\mathbf{h}_{k_0},\mathbf{h}_{k_0+1}, \dots, ,\mathbf{h}_{k_{0}+N-1}] 
\end{align}
\begin{align}\label{Hankel1}
{\bf{H}}_{NN}^{k_0+1}=
\left(\begin{matrix}
h(k_0+1) & h(k_0+2) & \dots & h(k_0+N) \\
h(k_0+2) & h(k_0+3) & \dots & h(k_0+N+1) \\
\vdots & \vdots & \vdots & \vdots \\
h(k_0+N) & h(k_0+N+1) & \dots & h(k_0+2N-1) 
\end{matrix}\right)=[\mathbf{h}_{k_0+1},\mathbf{h}_{k_0+2}, \dots, \mathbf{h}_{k_{0}+N}].
\end{align}

Notice that ${\bf{H}}_{NN}^{k_0+1}$ is essentially a shift of ${\bf{H}}_{NN}^{k_0}$, as the first $N-1$ columns of ${\bf{H}}_{NN}^{k_0+1}$ coincide with the last $N-1$ columns of ${\bf{H}}_{NN}^{k_0}$ apart from the last entry. 

In the following we will often write  ${\bf{H}}_{NM}^{k_0}$ and  ${\bf{H}}_{NM}^{k_0+1}$, each of order $N \times M$ with $N \geq M$, for the truncation  Hankel matrices ${\bf{H}}^{k_0}_{NN}$ and  ${\bf{H}}^{k_0+1}_{NN}$, respectively formed by their first $M$ columns.

The next lemma contains two properties of  these Hankel matrices that are relevant to our method.

\begin{lemma} \label{lemma}
Let us assume $M$  known and the sampled data noiseless.  
Then: 
\begin{itemize}
\item[(a)] The matrices \eqref{Hankel0} and \eqref{Hankel1} have rank $M$, that is 
\begin{equation} \label{rank} 
{\rm{rank}} \, {\bf{H}}_{NN}^{k_0}={\rm{rank}} \, {\bf{H}}_{NN}^{k_0+1}=M;
\end{equation}
\item[(b)] The following relation holds true
\begin{equation} \label{connection}
{\bf{H}}_{NM}^{k_0+1} = {\bf{H}}_{NM}^{k_0} \, {\bf C}_{M}(P)
\end{equation}
where ${\bf C}_M(P)$ is the companion matrix of the Prony polynomial, i.e.
\begin{align*}
{\bf{C}}_{M}(P)=
\left(\begin{matrix}
0 & 0 & \dots & 0 & -p_0 \\
1 & 0 & \dots & 0 & -p_1 \\
\vdots & \vdots & \vdots & \vdots \\
0 & 0 & \dots & 1 & -p_{M-1} 
\end{matrix}\right).
\end{align*}
\end{itemize}
\end{lemma}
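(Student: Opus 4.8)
The plan is to exploit the structure of the Hankel matrices that comes from the fact that the data $h(k)$ solve a linear difference equation of order $M$, namely the one whose characteristic polynomial is the Prony polynomial $P(z)=\prod_{j=1}^n (z-z_j)^{m_j}=z^M+p_{M-1}z^{M-1}+\dots+p_0$. The first step is to write a factorization of the truncated Hankel matrix. Define the generalized Vandermonde (Casorati) matrix $\mathbf V$ whose columns are the sequences $\bigl(k^s z_j^k\bigr)_{k=k_0}^{k_0+N-1}$ for $j=1,\dots,n$ and $s=0,\dots,m_j-1$ (so $\mathbf V$ is $N\times M$), and the analogous $M\times M$ matrix $\mathbf W$ built from shifts along rows. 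Then one checks directly from \eqref{hk1} that
\[
{\bf H}_{NM}^{k_0} = \mathbf V\, \mathbf D\, \mathbf W,
\]
for an appropriate invertible (block upper-triangular) matrix $\mathbf D$ absorbing the coefficients $c_{js}$; the point is that the nonsingularity of the confluent Vandermonde pieces (because the $z_j$ are distinct and nonzero) forces $\mathbf V$ to have full column rank $M$ and $\mathbf W$ to be invertible, so ${\rm rank}\,{\bf H}_{NM}^{k_0}=M$. Part (a) then follows because ${\bf H}_{NN}^{k_0}$ has the same column space as ${\bf H}_{NM}^{k_0}$: each further column $\mathbf h_{k_0+i}$ with $i\ge M$ is a fixed linear combination of the preceding $M$ columns, by the order-$M$ recurrence satisfied by $h$. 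The same argument applies verbatim to ${\bf H}_{NN}^{k_0+1}$ and ${\bf H}_{NM}^{k_0+1}$.

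For part (b), the key observation is the column recurrence: writing $\mathbf h_k$ for the column $\bigl(h(k),h(k+1),\dots,h(k+N-1)\bigr)^{\!\top}$, the difference equation $h(k+M)=-\sum_{\ell=0}^{M-1} p_\ell\, h(k+\ell)$ gives, for every $k\ge k_0$,
\[
\mathbf h_{k+M} = -\sum_{\ell=0}^{M-1} p_\ell\,\mathbf h_{k+\ell}.
\]
Now ${\bf H}_{NM}^{k_0+1}=[\mathbf h_{k_0+1},\dots,\mathbf h_{k_0+M}]$, and the first $M-1$ of these columns are exactly the last $M-1$ columns of ${\bf H}_{NM}^{k_0}=[\mathbf h_{k_0},\dots,\mathbf h_{k_0+M-1}]$. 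Multiplying ${\bf H}_{NM}^{k_0}$ on the right by ${\bf C}_M(P)$ produces, for its $i$th output column ($i=1,\dots,M-1$), the column $\mathbf h_{k_0+i}$, and for the last output column the combination $-\sum_{\ell=0}^{M-1} p_\ell\,\mathbf h_{k_0+\ell}=\mathbf h_{k_0+M}$; matching these against the columns of ${\bf H}_{NM}^{k_0+1}$ gives \eqref{connection}. So (b) is really just the matrix transcription of the recurrence, once the recurrence itself is in hand.

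The main thing to nail down carefully is therefore the recurrence: that each sequence $k\mapsto k^s z_j^k$ (and hence $h$) is annihilated by the constant-coefficient difference operator with characteristic polynomial $P$. This is the standard fact that $(z-z_j)^{m_j}$ annihilates $k^s z_j^k$ for $0\le s<m_j$ — most cleanly seen by applying the shift operator $E$ and noting $(E-z_j)(k^s z_j^k)$ is $z_j$ times a polynomial of degree $s-1$ in $k$ times $z_j^k$, so $(E-z_j)^{m_j}$ kills it — and then $P(E)=\prod_j (E-z_j)^{m_j}$ annihilates every term of $h$. The only subtlety is bookkeeping: making sure the confluent-Vandermonde rank statement is invoked correctly (distinct nonzero $z_j$, and $N\ge M$) so that the factorization above genuinely certifies rank exactly $M$ rather than merely at most $M$. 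Everything else is routine linear algebra.
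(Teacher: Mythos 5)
Your proposal is correct, and for part (b) and for the ``rank $\le M$'' half of part (a) it coincides with the paper's argument: both rest on the observation that $P(E)$ annihilates every term $k^s z_j^k$, hence $h$, so that each column $\mathbf h_{k_0+i}$ with $i\ge M$ is the fixed linear combination $-\sum_{\ell=0}^{M-1}p_\ell\,\mathbf h_{k_0+i-M+\ell}$ of its $M$ predecessors, and \eqref{connection} is just the matrix transcription of this recurrence. Where you genuinely diverge is in certifying that the rank is \emph{exactly} $M$. The paper argues via difference-equation theory: the $M$ sequences $k^s z_j^k$ form a basis of the solution space of \eqref{difference}, $h$ is the general solution determined by $M$ initial values, and the conclusion ${\rm rank}\,{\bf H}_{NN}^{k_0}=M$ is then asserted; strictly speaking this only makes the upper bound explicit. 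Your factorization ${\bf H}_{NM}^{k_0}=\mathbf V\,\mathbf D\,\mathbf W$ through confluent Vandermonde (Casorati) matrices supplies the missing lower bound constructively, which is a real gain in completeness. It also exposes a hypothesis both you and the paper leave tacit: $\mathbf D$ is invertible only if the leading coefficients $c_{j,m_j-1}$ are all nonzero (equivalently, the representation \eqref{hx} is minimal, so that $h$ satisfies no difference equation of order $<M$); if some $c_{j,m_j-1}=0$ the rank genuinely drops below $M$. It would be worth stating that assumption explicitly when you flesh out the invertibility of the block anti-triangular matrix $\mathbf D$, whose diagonal blocks have determinant a nonzero multiple of a power of $c_{j,m_j-1}$. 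With that caveat recorded, your plan is sound and, if anything, tighter than the published proof.
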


\begin{proof}
To prove $(a)$, we interpret $h(k)$ as the general solution of a  homogeneous linear difference equation  of order $M$
\begin{equation}\label{difference}
\sum_{k=0}^{M} p_k h_{k+m}= 0, \quad p_M=1
\end{equation}
whose characteristic polynomial is the Prony polynomial, i.e. the monic polynomial of degree $M$ having $z_j$ as the $j$th zero with multiplicity $m_j$ 
\begin{equation}\label{Prony}
P(z)=\prod_{j=1}^{n} (z-z_j)^{m_j} =\sum_{k=0}^M p_k z^k, \quad p_M \equiv 1.
\end{equation}

It is well known that equation \eqref{difference}, regardless of the values $\{p_k\}_{k=0}^{M-1}$, has a unique solution $h_k$, for each given set of $M$ initial conditions $h_{k_0}, h_{k_0+1}, \dots, h_{k_0+M-1}$ \cite{LakTri2002}.\\
Since \eqref{Prony} is the characteristic polynomial of equation \eqref{difference}, each function
$h_{j,s}(k)=k^s z_j^k, j=1, \dots, n, s=0,1, \dots, m_j-1,$
is a solution of  \eqref{difference}.  Moroever, they are  linearly independent \cite[Theorem 2.2.3]{LakTri2002} and represent a basis for the vector space of solutions of \eqref{difference}.
Hence the function $h(k)$  is the general solution of \eqref{difference} and its coefficients $\{c_{js}\}_{j=1,s=0}^{n,m_j-1}$ can be uniquely determined by fixing $M$ initial values $	h(k_0), h(k_0+1), \cdots,  h(k_0+M-1) $.
Then, if we consider  the first $M$ columns  $\mathbf{h}_0,\mathbf{h}_1, \dots, \mathbf{h}_{M-1}$ of $\mathbf{H}_{NN}^{k_0}$ as initial data, we can say that the columns $\mathbf{h}_M,\mathbf{h}_{M+1}, \dots, \mathbf{h}_{N}$ are a linear combination of the first ones. As a result, ${\rm{rank}} \, {\bf{H}}_{NN}^{k_0}=M$. The same conclusion holds if $k_0$ is replaced by $k_0+1$, so that ${\rm{rank}} \, {\bf{H}}_{NN}^{k_0+1}=M$.

Relation \eqref{connection} is immediate as the product between ${\bf{H}}_{NM}^{k_0}$ and the $j$th column of  ${\bf C}_{M}(P)$ gives the $(j+1)$th column of ${\bf{H}}_{NM}^{k_0+1}$    and further, by virtue of \eqref{difference}, we have
$$ -\sum_{k=0}^{M-1}p_k h_{k+k_0}=h_{k_0+M}.$$
\end{proof}

The next theorem contains two results basic to our method.
\begin{theorem}
The zeros of the Prony polynomial, with their multiplicities, are exactly  
the eigenvalues, with the same multiplicity,  of the matrix-pencil
\begin{equation}
{\bf{H}}_{MM}(z)=({{\bf{H}}_{NM}^{k_0}})^* ({\bf{H}}_{NM}^{k_0+1} - z {\bf{H}}_{NM}^{k_0})
\end{equation} 
where the asterisk denotes the conjugate transpose.

Moreover,  the coefficients $c_{js}$ appearing in  \eqref{hx} are the solutions of the linear system 

\begin{equation}\label{Casorati_system}
\mathbf{K}_M^{k_0} \mathbf{c}=\mathbf{h}^{k_0}
\end{equation}
where $\mathbf{c}=[c_{1,0},...,c_{1,n_1-1},...,c_{M,0},...,c_{M,n_{n}-1}]^T$,   ${\bf{h}}^{k_0}=[h(k_0),\, h(k_0+1),\, \dots,\, h(k_0+M-1)]^T$ and  $\mathbf{K}_M^{k_0}$ is the Casorati matrix
\begin{equation}\label{Casorati_matrix}
\mathbf{K}_M^{k_0}=\left(\begin{matrix}
z_1^{k_0} & k_0 z_1^{k_0} & \dots & k_0^{n_1-1}z_1^{k_0} & \dots & z_n^{k_0} & k_0 z_n^{k_0} & \dots & k_0^{n_n-1} z_n^{k_0} \\
z_1^{k_1} & k_1 z_1^{k_1} & \dots & k_1^{n_1-1}z_1^{k_1} & \dots & z_n^{k_1} & k_1 z_n^{k_1} & \dots & k_1^{n_n-1} z_n^{k_1} \\
\vdots & \vdots & \vdots & \vdots & \vdots & \vdots & \vdots & \vdots & \vdots \\
z_1^{k_{M-1}} & k_{M-1} z_1^{k_{M-1}} & \dots & k_{M-1}^{n_1-1}z_1^{k_{M-1}} & \dots & z_n^{k_{M-1}} & k_{M-1} z_n^{k_{M-1}} & \dots & k_{M-1}^{n_n-1} z_n^{k_{M-1}} \\
\end{matrix}\right).
\end{equation}

\end{theorem}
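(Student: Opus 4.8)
The plan is to split the theorem into its two claims and handle them in order. For the first claim — that the zeros of the Prony polynomial $P$, with their multiplicities, are exactly the eigenvalues of the pencil ${\bf H}_{MM}(z) = ({\bf H}_{NM}^{k_0})^*({\bf H}_{NM}^{k_0+1} - z{\bf H}_{NM}^{k_0})$ — I would start from relation \eqref{connection} of Lemma \ref{lemma}, which gives ${\bf H}_{NM}^{k_0+1} = {\bf H}_{NM}^{k_0}\,{\bf C}_M(P)$. Substituting this in, the pencil factors as
\begin{equation*}
{\bf H}_{MM}(z) = ({\bf H}_{NM}^{k_0})^*\,{\bf H}_{NM}^{k_0}\,({\bf C}_M(P) - z\,{\bf I}_M).
\end{equation*}
By part (a) of the Lemma, ${\bf H}_{NM}^{k_0}$ has rank $M$ (it is $N\times M$ with $N \geq M$ and the truncation of a rank-$M$ Hankel matrix to its first $M$ columns preserves the rank, since those columns are the initial data that determine the difference-equation solution), hence $({\bf H}_{NM}^{k_0})^*{\bf H}_{NM}^{k_0}$ is $M\times M$ Hermitian positive definite, in particular invertible. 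Therefore $\det {\bf H}_{MM}(z) = \det\big(({\bf H}_{NM}^{k_0})^*{\bf H}_{NM}^{k_0}\big)\cdot\det({\bf C}_M(P) - z{\bf I}_M)$, and the generalized eigenvalues of the pencil coincide, with multiplicities, with the ordinary eigenvalues of the companion matrix ${\bf C}_M(P)$. The last ingredient is the classical fact that the characteristic polynomial of the companion matrix of a monic polynomial is that polynomial itself (up to sign), so the eigenvalues of ${\bf C}_M(P)$ are exactly the zeros $z_j$ of $P$ with multiplicities $m_j$. This closes the first part.

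For the second claim, the argument is essentially a bookkeeping one. Evaluating the representation \eqref{hx1} of $h$ at the sample points $k = k_0, k_0+1, \dots, k_0+M-1$ gives $M$ scalar equations
\begin{equation*}
h(k_i) = \sum_{j=1}^{n}\sum_{s=0}^{m_j-1} c_{js}\, k_i^{\,s} z_j^{\,k_i}, \qquad i = 0,1,\dots,M-1,
\end{equation*}
and collecting the right-hand side into a matrix-times-vector product, with the coefficients ordered as in $\mathbf{c}$, yields exactly $\mathbf{K}_M^{k_0}\mathbf{c} = \mathbf{h}^{k_0}$ with the Casorati matrix \eqref{Casorati_matrix}. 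One should remark that this system has a unique solution: its coefficient matrix $\mathbf{K}_M^{k_0}$ is the Casorati (confluent Vandermonde) matrix of the $M$ linearly independent solutions $h_{j,s}(k) = k^s z_j^k$ of the difference equation \eqref{difference}, evaluated at $M$ consecutive points; its nonsingularity follows from the same uniqueness-of-solution-given-$M$-initial-conditions argument already invoked in the proof of Lemma \ref{lemma}(a) (equivalently, from $\prod_{j} z_j^{m_j} = \pm p_0 \neq 0$ together with the standard nonvanishing of the confluent Vandermonde determinant).

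**Main obstacle.** The genuinely substantive step is the first one, and within it the only nontrivial point is verifying that $({\bf H}_{NM}^{k_0})^*{\bf H}_{NM}^{k_0}$ is invertible, i.e. that the truncated matrix ${\bf H}_{NM}^{k_0}$ retains full column rank $M$. Lemma \ref{lemma}(a) asserts $\operatorname{rank}{\bf H}_{NN}^{k_0} = M$, but one must argue that it is precisely the first $M$ columns that are independent; this is immediate from the proof of the Lemma, where the first $M$ columns $\mathbf{h}_{k_0},\dots,\mathbf{h}_{k_0+M-1}$ play the role of independent initial data and all later columns are expressed as their linear combinations — so I would simply cite that. Everything after that (the determinant factorization, the companion-matrix spectrum, the assembly of the Casorati system) is routine linear algebra, and the multiplicity statement is automatic because the determinant identity is an identity of polynomials in $z$, not merely of their root sets.
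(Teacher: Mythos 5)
Your proposal is correct and follows essentially the same route as the paper: substitute the relation ${\bf H}_{NM}^{k_0+1}={\bf H}_{NM}^{k_0}{\bf C}_M(P)$ to factor the pencil, use the full column rank of ${\bf H}_{NM}^{k_0}$ to get $\det{\bf H}_{MM}(z)=\det\bigl(({\bf H}_{NM}^{k_0})^*{\bf H}_{NM}^{k_0}\bigr)P(z)$, and then invoke nonsingularity of the Casorati matrix for the coefficient system. The extra details you supply (why the first $M$ columns are the independent ones, the companion-matrix spectrum fact) are sound elaborations of steps the paper treats as immediate.
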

\begin{proof}
By using \eqref{connection}, we can write
\begin{equation}
{\bf{H}}_{MM}(z)= ({{\bf{H}}_{NM}^{k_0}})^* {\bf{H}}_{NM}^{k_0} ({\bf C}_{M}(P)-z {\bf{I}}_{MM})  
\end{equation}
where ${\bf{I}}_{MM}$ is the identity matrix of order $M$. Hence, 
the first statement follows by  noting that
$$ {\rm{det}} \,{\bf{H}}_{MM}(z)={\rm{det}}(({{\bf{H}}_{NM}^{k_0}})^* {\bf{H}}_{NM}^{k_0})\, {\rm{det}} ({\bf C}_{M}(P)-z {\bf{I}}_{MM})={\rm{det}}(({{\bf{H}}_{NM}^{k_0}})^* {\bf{H}}_{NM}^{k_0})\, P(z),$$
and by taking into account that  ${\rm{det}}(({{\bf{H}}_{NM}^{k_0}})^* {\bf{H}}_{NM}^{k_0}) \neq 0$ as ${\bf{H}}_{NM}^{k_0}$ has full rank.
Concerning system \eqref{Casorati_system}, we note that its matrix is non singular regardless of the $k_0$ value as it is the Casorati matrix, which plays in the theory of difference equations the same role as the Wronskian matrix in the theory of differential equations. Notice that  the Casorati matrix coincides with the Vandermonde matrix ${\bf{V}}_{M}=[z_j^{k_i}]_{i=0,j=1}^{M-1,n}$ whenever all zeros $z_j$ are simples ($\eta_j \equiv 1$). 
\end{proof}

{\emph{Computation of $\{z_j,\, m_j \,, f_j \}$.}} Knowing $M$, the computation of the parameters we are looking for, 
can then be carried out  by solving the following generalized eigenvalue problem
\begin{equation}
 ({{\bf{H}}_{NM}^{k_0}})^* {\bf{H}}_{NM}^{k_0+1} x=z  ({{\bf{H}}_{NM}^{k_0}})^* {\bf{H}}_{NM}^{k_0} x, \quad x \neq 0.
\end{equation}

To this end we factorize the matrices  ${\bf{H}}_{NM}^{k_0+1}$ and ${\bf{H}}_{NM}^{k_0}$ by means of the Generalized Singular Value Decomposition (GSVD) \cite{Golub1996} 

\begin{align}
{\bf{H}}_{NM}^{k_0+1}&={\bf{U}}_{NN} \left(\begin{matrix}
{\bf{\Sigma}}_{MM}^{k_0+1}\\
{\bf{0}}_{N-M,M}
\end{matrix}\right) {\bf{X}}_{MM} \label{gsvd1}\\
{\bf{H}}_{NM}^{k_0}&={\bf{V}}_{NN} \left(\begin{matrix}
{\bf{\Sigma}}_{MM}^{k_0}\\
{\bf{0}}_{N-M,M}
\end{matrix}\right) {\bf{X}}_{MM} \label{gsvd2}
\end{align}
where ${\bf{\Sigma}}_{MM}^{k_0+1}$ and ${\bf{\Sigma}}_{MM}^{k_0}$ are two non-negative diagonal matrices of order $M$, ${\bf{U}}_{NN}$ and ${\bf{V}}_{NN}$ are two square unitary matrices of order $M$, ${\bf{X}}_{MM}$ is a nonsingular matrix of order $M$ and $\bf{O}_{N-M,M}$ is the null matrix of order $(N-M) \times M$.

Thus, by using \eqref{gsvd1} and \eqref{gsvd2}, we can rewrite the matrix-pencil as
\begin{align*}
{\bf{H}}_{MM}(z)& =({\bf{X}}_{MM})^* \, \left[\begin{matrix}
({\bf{\Sigma}}_{MM}^{k_0})^* & 
{\bf{0}}_{M,N-M}
\end{matrix}\right] ({\bf{V}}_{NN})^* {\bf{U}}_{NN} \left(\begin{matrix}
{\bf{\Sigma}}_{MM}^{k_0+1}\\
{\bf{0}}_{N-M,M}
\end{matrix}\right) {\bf{X}}_{MM} \\ & \hspace{1 cm}-z ({\bf{X}}_{MM})^* \, \left[\begin{matrix}
({\bf{\Sigma}}_{MM}^{k_0})^* & 
{\bf{0}}_{M,N-M}
\end{matrix}\right]  \left(\begin{matrix}
{\bf{\Sigma}}_{MM}^{k_0}\\
{\bf{0}}_{N-M,M}
\end{matrix}\right) {\bf{X}}_{MM} \\ 
& = 
({\bf{X}}_{MM})^* \, ({\bf{\Sigma}}_{MM}^{k_0})^*  \left[ ({\bf{V}}_{NM})^* {\bf{U}}_{NM} 
{\bf{\Sigma}}_{MM}^{k_0+1}-z {\bf{\Sigma}}_{MM}^{k_0}  \right] {\bf{X}}_{MM}.
\end{align*}

As a result, the generalized eigenvalues of the matrix-pencil, and then the zeros of the Prony polynomial, are exactly the eigenvalues of the matrix 

$$ ({\bf{\Sigma}}_{MM}^{k_0})^{-1} ({\bf{V}}_{NM})^* {\bf{U}}_{NM} 
{\bf{\Sigma}}_{MM}^{k_0+1},$$
which can be effectively computed by using the $\emph{eig}$ algorithm of MATLAB.

In this way we compute the zeros $z_j$ with their multiplicities $m_j$ and of course $n$. The computation of $f_j$ is immediate as $z_j=e^{f_j}$, $j=1 \,, \dots,\,n.$

It is interesting to note that if $N=M$, the zeros $z_j$ of the Prony polynomial can be computed by considering the simple matrix-pencil 
$$\widehat{H}_{MM}(z)=H^{k_0+1}_{MM}-zH^{k_0}_{MM}.$$
In this case, considering that $H^{k_0+1}_{MM}$ and $H^{k_0}_{MM}$ are symmetric the $QZ$ technique \cite{Golub1996} is a very effective technique as explained in \cite{Golub1999}. In this paper, we do not consider this case because our numerical experiments show that using all available data $h(k)$ is more effective, although the numerical procedure is computationally more complex. This numerical evidence agrees with those obtained in the parameter estimation for exponential sums \cite{Potts2013}.

{\emph{Computation of $\{c_{js} \}$.}}
Once $\{n,\, z_j,\, m_j \}$ has been computed, we are in a position to evaluate the coefficients $c_{js}$, given $h(k)$ in $M$ distinct points $\{k_0,\, k_1,\, \dots,\,k_{M-1} \}$. Indeed, we can write down the Casorati matrix and then solve linear system \eqref{Casorati_system}.

Although theoretically not necessary, our numerical tests suggest to use more then $2M$ data. For this reason, whenever it is possible we prefer to use $2N$ ($N>M$) sampled data and to compute the eigenvalues by solving, in the least squares sense, the overdetermined linear system 
\begin{equation}
\mathbf{K}_{2N, M}^{k_0} \mathbf{c}=\mathbf{h}^{k_0}
\end{equation}
where $\mathbf{h}^{k_0}=[h(k_0),\, h(k_0+1),\, \dots,\, h(k_0+2N-1)]$ and $\mathbf{K}_{NM}^{k_0}$ is the Casorati matrix of order $2N \times M$ ($N>M$), obtained as a natural extension of \eqref{Casorati_matrix}. As can be expected, this extention is increasingly important as the ratio noise/signal increases.


\section{Not knowing the value of $M$}\label{method2}

Now we assume that $M$, that is the exact number of terms in \eqref{hx}, is an unknown parameter, assuming that, as usual in applications, only a reasonable upper bound $ \widehat{M}$ of $M$ is known.

Under this hypothesis, we want to recover all of the parameters and coefficients $\{ n,m_j,f_j, c_{js}\}$ of \eqref{hx} assuming to have an estimate of $h(k)$ in a set of $2N$ data $\{k_0,\, k_0+1,\,\dots,\,k_0+2N-1 \} \in \mathbb{N}^+_{k_0}$ with $N \geq \widehat{M}$.
In this case we have first to estimate $M$, which can be done by using the following.
\begin{theorem}
In the absence of  noise on the data, the rank of the $N \times \widehat{M}$ Hankel matrix 

\begin{align*}
\mathbf{H}_{N\widehat{M}}^{k_0}=\left(\begin{matrix}
h(k_0) & h(k_0+1) & \dots & h(k_0+\widehat{M}-1) \\
h(k_0+1) & h(k_0+2) & \dots & h(k_0+\widehat{M}) \\
\vdots & \vdots & \vdots & \vdots \\
h(k_0+N-1) & h(k_0+N) & \dots & h(k_0+N+\widehat{M}-2) 
\end{matrix}\right)=[\mathbf{h}^{k_0},\mathbf{h}^{h_0+1}, \dots, \mathbf{h}^{k_0+\widehat{M}-1}]
\end{align*}
which is a natural extension of $H_{NM}$ ($\widehat{M} \geq M$),  is exactly $M$.
\end{theorem}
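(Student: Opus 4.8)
The plan is to bound $\mathrm{rank}\,\mathbf{H}_{N\widehat{M}}^{k_0}$ from above and from below separately, in both cases exploiting that $h(k)$ solves the order-$M$ difference equation \eqref{difference} whose characteristic polynomial is the Prony polynomial \eqref{Prony}. For the upper bound I would first show that the column space of $\mathbf{H}_{N\widehat{M}}^{k_0}$ is already spanned by its first $M$ columns, so that $\mathrm{rank}\,\mathbf{H}_{N\widehat{M}}^{k_0}\le M$. Writing the columns as $\mathbf{h}^{k_0},\dots,\mathbf{h}^{k_0+\widehat{M}-1}$ with $\mathbf{h}^{k_0+j}=[h(k_0+j),\dots,h(k_0+j+N-1)]^T$, the identity \eqref{difference} (valid for every admissible integer index) gives, for each $j\ge M$ and all $i=0,\dots,N-1$, $h(k_0+j+i)=-\sum_{k=0}^{M-1}p_k\,h(k_0+j-M+k+i)$, i.e. $\mathbf{h}^{k_0+j}=-\sum_{k=0}^{M-1}p_k\,\mathbf{h}^{k_0+j-M+k}$ — which is just the iteration of \eqref{connection}. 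A straightforward induction on $j$ then expresses every column with $j\ge M$ as a linear combination of $\mathbf{h}^{k_0},\dots,\mathbf{h}^{k_0+M-1}$, establishing the bound.

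For the lower bound I would show that those first $M$ columns are linearly independent, which then forces the rank to be exactly $M$. The quick route is to invoke Lemma \ref{lemma}(a): the underlying object is unchanged (only the label $M$ is now treated as unknown), so $\mathrm{rank}\,\mathbf{H}_{NN}^{k_0}=M$; since by the previous paragraph the column space of $\mathbf{H}_{NN}^{k_0}$ is spanned by its first $M$ columns, those must be independent, and they are precisely the first $M$ columns of $\mathbf{H}_{N\widehat{M}}^{k_0}$ (recall $\widehat{M}\le N$). Alternatively one can argue directly: a relation $\sum_{\ell=0}^{M-1}a_\ell\,\mathbf{h}^{k_0+\ell}=0$ means the sequence $g(k):=\sum_{\ell=0}^{M-1}a_\ell\,h(k+\ell)$ vanishes at the $N\ (\ge M)$ consecutive nodes $k=k_0,\dots,k_0+N-1$; but $g$ is again of the form $\sum_{j=1}^{n}\sum_{s=0}^{m_j-1}\gamma_{js}k^sz_j^k$, so by the linear independence of the sequences $\{k^sz_j^k\}$ \cite[Theorem 2.2.3]{LakTri2002} (equivalently, the nonsingularity of a Casorati matrix on $M$ consecutive nodes) we get $g\equiv0$; inspecting the coefficient of the top power $k^{m_j-1}z_j^k$ in $g$, which equals $c_{j,m_j-1}$ times the value at $z_j$ of $q(z):=\sum_\ell a_\ell z^\ell$, and then, peeling off, the lower powers, and using $c_{j,m_j-1}\ne0$, one finds that $q$ is divisible by $(z-z_j)^{m_j}$ for every $j$, hence by $P$; since $\deg q\le M-1<M=\deg P$, necessarily $q\equiv0$, i.e. all $a_\ell=0$.

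Combining the two bounds gives $\mathrm{rank}\,\mathbf{H}_{N\widehat{M}}^{k_0}=M$. The only genuinely delicate point is the lower bound: one must be certain that $h$ satisfies no homogeneous linear difference equation of order $<M$, i.e. that the Prony polynomial really has degree $M$ (all $z_j$ distinct and all leading coefficients $c_{j,m_j-1}$ nonzero), and that the noiseless data span enough consecutive nodes ($N\ge\widehat{M}\ge M$) for a Casorati matrix to be invertible; the rest is just bookkeeping with the difference-equation relations already used in Lemma \ref{lemma}.
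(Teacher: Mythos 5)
Your proof is correct and follows essentially the same route as the paper: iterate the difference equation \eqref{difference} to express every column beyond the $M$-th as a linear combination of the first $M$, then conclude that the rank equals exactly $M$ via Lemma \ref{lemma}(a). You are in fact more careful than the paper about the lower bound --- the paper asserts $\mathrm{rank}\,\mathbf{H}_{N\widehat{M}}^{k_0}=M$ without explicitly verifying the linear independence of the first $M$ columns --- and your alternative Casorati/divisibility argument usefully pins down the standing assumptions (distinct $z_j$ and $c_{j,m_j-1}\neq 0$) on which that independence, and hence the whole theorem, rests.
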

\begin{proof}
By virtue of \eqref{difference}, considering the entries of the first $M$ arrays $[\mathbf{h}^{k_0},\, \dots,\, \mathbf{h}^{k_0+M-2}]$ of $\mathbf{H}_{N\widehat{M}}^{k_0}$ as initial data, we get $\mathbf{h}^{k_0+M-1}$ as a linear combination of these vectors. By changing $M$ into $M+1$ and using $[\mathbf{h}^{k_0+1},\, \dots,\, \mathbf{h}^{k_0+M-1}]$ as initial data for \eqref{difference}, we get $\mathbf{h}^{k_0+M}$ as a linear combination of such vectors and then of $[\mathbf{h}^{k_0},\, \dots,\, \mathbf{h}^{k_0+M-2}]$. Iterating the procedure we obtain that each column vector $[\mathbf{h}^{k_0+M-1},\, \dots,\, \mathbf{h}^{k_0+\widehat{M}-1}]$ is a linear combination of $[\mathbf{h}^{k_0},\, \dots,\, \mathbf{h}^{k_0+M-2}]$, which means that ${\rm{rank} \, \mathbf{H}_{N\widehat{M}}^{k_0}}=M={\rm{rank} \, \mathbf{H}_{NM}^{k_0}}$.
\end{proof}

Our experience suggests that a reliable estimate of $M$ can be obtained  by using a standard MATLAB technique and then applying the numerical method illustrated above.

\section{Numerical Results}\label{tests}
In this section we illustrate the results of an extensive numerical experimentation concerning  various examples, some already considered in the literature and others, to our knowledge, never considered before. 

To ascertain the effectiveness of our method, 
for each example considered, we estimate the relative error for the exponents $f_j$ and the coefficients $c_{js}$ for $j=1,\dots,n$, $s=0,\, \dots,\, m_j-1$, by using the following error estimates
\begin{equation}
e({\bf{f}})= \max_{j=1,\,\dots\,,n} \left| 1-\frac{f_j}{f_j^*} \right|, \quad e({\bf{c}})= \max_{ \substack{j=1,\,\dots\,,n \\ s=0,\,\dots,\,m_j-1}} \left| 1-\frac{c_{js}}{c_{js}^*} \right|
\end{equation}
where $f^*_j$ and $c_{js}^*$ denote the exact values of the parameters.
Moreover, denoting by $[0,\, b]$ the domain of $h(x)$ that mainly interest us, we adopt the following relative error estimate  of the monomial-exponential sum:

\begin{equation}
e({\bf{h}})= \max_{x \in X } \left| 1-\frac{h(x)}{h^*(x)} \right|
\end{equation}
where $X=\{x_i=i\frac{b}{50}, \,  i=1,\,\dots,\, 50 \}$.

In each test function  we assume  $M$ unknown and consider both the case of exact data and the case of noisy data. In the latter case we consider white noise, that is we assume  $$h(k)=\tilde h(k)+\delta e_k, \quad k=k_0,\,\dots,\,k_0+2N-1$$
where $\tilde{h}(k)$ denotes the exact values of the monomial exponential sum , $e_k \in [0,\,1]$ is a random array and $\delta$ is the standard deviation of the sampled data.


All the  computations have been carried out in MATLAB with $\epsilon_{machine}=2.22 \, \cdot \, 10^{-16}$

\subsection{Example 1.} \label{esempio6zeri}

Let us first consider an exponential sum already considered in  \cite{Potts2013}. More precisely, assuming $m_1=m_2= \dots =m_n=1$, we considered $h(x)$ as in \eqref{hx1}  with the following coefficients $c_j$ and zeros $z_j$: 
\begin{equation}
{\bf c} = \left[ \begin{matrix}
1 \\
2 \\
3 \\
4 \\
5 \\
6 
\end{matrix}\right],\quad {\bf z} = \left[ \begin{matrix}
0.9856-0.1628i \\
0.9856+0.1628i \\
0.8976-0.4305i \\
0.8976+0.4305i \\
0.8127-0.5690i \\
0.8127+0.5690i 
\end{matrix}\right].
\end{equation}
Considering data without and  with noisy  and taking $b=50$ we obtain the results reported in Table \ref{Table_6zeros_exact} and in Table \ref{Table_6zeros_noise}, respectively.

\begin{table}[!h]
\centering
\begin{tabular}{c|c|ccc}
$N$ & $\widehat{M}$ & $e(\bf{f})$ & $e(\bf{c})$ & $e(\bf{h})$\\
\hline
6   & 6 &  7.56e-09 & 6.35e-09 & 1.21e-07\\
12  & 10 & 8.63e-12  & 1.31e-11  & 1.10e-10\\
24  & 10 & 8.63e-12  & 8.98e-12  &  2.41e-11\\
36  & 10 & 4.75e-12 & 2.77e-11 &  8.64e-11\\
48  & 10 & 6.77e-13 & 5.42e-12 & 1.08e-11\\
\hline
\end{tabular}
\caption{ Error estimates with exact data for Example 1 \label{Table_6zeros_exact}}
\end{table}

\begin{table}[!h]
\centering
\begin{tabular}{c|c|c|ccc}
$N$ & $\delta$ & $\widehat{M}$ & $e(\bf{f})$ & $e(\bf{c})$ & $e(\bf{h})$\\
\hline
6   & $10^{-9}$ &  6  &  1.73e-03 & 2.37e-03 & 2.57e-02	\\
12  & $10^{-9}$ & 10 & 1.26e-07 & 9.77e-07   & 1.91e-05 \\
24  & $10^{-9}$ & 10 & 6.72e-10 & 4.11e-09   & 3.23e-08 \\
36  & $10^{-9}$ & 10 & 1.29e-10 &3.36e-08 &   2.20e-07 \\
48  & $10^{-9}$ & 10 & 4.06e-10  &4.22e-09    &2.35e-08 \\
\hline
\end{tabular}
\caption{Error estimates with  noisy  data for Example 1 \label{Table_6zeros_noise}}
\end{table}

%

It is worthwhile to note that, in the absence of noise, our method identifies the exact values of $M$, regardless the number of data we consider.
Table \ref{Table_6zeros_noise} shows that, if the data are noisy, as it should be expected, the estimate of $M$ is exact in the  case $N=M$ and overestimated whenever $N>M$. Nevertheless, as this table shows, the identification of both the parameters and the coefficients is very accurate even if $M$ is overestimated by $\widehat{M}$.

Moreover, for an immediate comparison of our results with those obtained by the methods considered in \cite{Potts2013}, we computed coefficients and zeros by using the error estimates proposed there. 
Our results, as Table \ref{Table_6zeros_exactpotts} and Table 4.1 of \cite{Potts2013} show, have the same level of error also when our upper bound estimate of $M$ is rather inaccurate.

\begin{table}[!h]
\centering
\begin{tabular}{c|c|ccccc}
 $N$ & $\widehat{M}$ & $e(\bf{f})$ & $e(\bf{c})$ & $e(\bf{h})$\\
\hline
 6  & 6 & 2.02e-09   & 1.07e-09  & 8.63e-15	\\
 7  & 7 & 5.97e-10 &  4.06e-10  & 9.56e-15 \\
 12 & 8 & 2.31e-12 &  2.18e-12 & 1.84e-13 \\
\hline
\end{tabular}
\caption{Further table of errors for Example 1 \label{Table_6zeros_exactpotts}}
\end{table}

\subsection{Example 2.} \label{esempiosignal}
Let $h(x)$ be the exponential sum expressed as in \eqref{hx1} with $M=n=5$ and characterized by the following coefficients and zeros:
\begin{equation}
{\bf c} = e^{15i} \left[ \begin{matrix}
3.1 \\
9.9 \\
6.0 \\
2.8 \\
17 
\end{matrix}\right],\quad {\bf z} = 2 * 10^{-5} \left[ \begin{matrix}
-208-2 \pi 1379 i \\
-256-2\pi 685 i\\
-197-2\pi 271i \\
-117+2\pi 353 i \\
-808+2\pi 478i 
\end{matrix}\right],
\end{equation} 
already considered in \cite{Potts2011}. The error estimates obtained both in the absence and in presence of noisy data are reported in Tables  \ref{table_5zeros_exact} and \ref{table_5zeros_noise}, respectively.
         
\begin{table}[!h]
\centering
\begin{tabular}{c|ccc}
$N$  & $e(\bf{f})$ & $e(\bf{c})$ & $e(\bf{h})$\\
\hline
5   & 3.44e-03 & 1.09e-02  & 4.68e-05	\\
10  & 3.95e-05 & 1.31e-04  & 3.19e-07 \\
15  & 2.10e-05 & 7.30e-05 & 8.09e-08 \\
20  & 3.39e-06 & 1.21e-05 & 5.49e-09\\
50  & 3.63e-08 & 1.53e-07 &  1.66e-09 \\ 
\hline
\end{tabular}
\caption{Error estimates with exact data for Example 2  \label{table_5zeros_exact}}
\end{table}

\begin{table}[!h]
\centering
\begin{tabular}{c|c|c|ccc}
$N$ & $\delta$ & $\widehat{M}$ & $e(\bf{f})$ & $e(\bf{c})$ & $e(\bf{h})$\\
\hline
5 & $10^{-9}$ & 5 & 2.14e+00 & 9.62e-01 & 2.01e+00\\
10 & $10^{-9}$ & 10  &  8.19e-03 &    2.80e-02    &  1.61e-04\\
15 & $10^{-9}$ & 10 &           9.84e-04  & 3.36e-03  &  6.69e-06 \\
10 & $10^{-9}$ & 10 &        2.00e-04  & 6.11e-04   &  2.95e-07 \\
50 & $10^{-9}$ & 10 &         2.21e-06 &1.15e-05     &  1.25e-08 \\
\hline
\end{tabular}
\caption{Error estimates  with noisy data for Example 2 \label{table_5zeros_noise}}
\end{table}

As already noted in \cite[Table 1]{Potts2011}, both tables show that recovering the parameters and coefficients in this example is more complicated than in the previous one. However, we obtain reliable results also for moderately high values of $N$, unlike what happens in \cite{Potts2011}.    

\subsection{Example 3} \label{esempio5zeridouble}
To test the effectiveness of the method in the case of multiple zeros, first we  modify Example 2 by assuming the first zero to be double. That is we assume that the new $h(x)$ function \eqref{hx1} is now characterized by the vector data 
\begin{equation}
{\bf c} = e^{15i} \left[ \begin{matrix}
3.1 \\
9.9 \\
6.0 \\
2.8 \\
17 
\end{matrix}\right],\quad {\bf z} = 2 * 10^{-5} \left[ \begin{matrix}
-208-2 \pi 1379 i \\
-208-2 \pi 1379 i \\
-197-2\pi 271i \\
-117+2\pi 353 i \\
-808+2\pi 478i 
\end{matrix}\right].
\end{equation} 
We note that our method gives reliable results also in this more complex situation as Tables \ref{table_5zeros_double_exact} and \ref{table_5zeros_double_noise} show.
\begin{table}[!h]
\centering
\begin{tabular}{c|c|ccc}
$N$ & $\widehat{M}$ & $e(\bf{f})$ & $e(\bf{c})$ & $e(\bf{h})$\\
\hline
5   & 5 & 2.86e-03 & 1.99e-01 &  4.48e-03 \\
10  & 10 & 4.56e-05 & 2.32e-02 & 4.46e-04\\
15  & 10 & 2.42e-05 & 1.22e-02 & 1.61e-04\\
20  & 10 & 9.10e-06 & 4.57e-03 & 2.95e-05\\
50  & 10 & 3.80e-06 & 1.57e-03 & 2.11e-04\\
\hline
\end{tabular}
\caption{Error estimates  with exact data for Example 3 \label{table_5zeros_double_exact}}
\end{table}

\begin{table}[!h]
\centering
\begin{tabular}{c|c|c|ccc}
$N$ & $\delta$ & $\widehat{M}$ & $e(\bf{f})$ & $e(\bf{c})$ & $e(\bf{h})$\\
\hline
5   & $10^{-9}$ &  5 & 4.87e+00 & 9.33e+01 & 5.65e+00  \\
10  & $10^{-9}$ & 10 & 2.95e-03 & 2.96e-01 & 5.63e-03\\
15  & $10^{-9}$ & 10 & 5.78e-04 & 1.73e-01 & 2.32e-03\\
20  & $10^{-9}$ & 10 & 1.49e-04 & 7.50e-02 & 4.83e-04\\
50  & $10^{-9}$ & 10 & 9.93e-06 & 4.10e-03 & 5.50e-04 \\
\hline
\end{tabular}
\caption{Error estimates  with  noisy  data for Example 3 \label{table_5zeros_double_noise}}
\end{table}

\subsection{Example 4} \label{esempio5zeri2double}
Let us consider again Example 2 assuming  that the first two zeros are double and the third is simple, that is setting 
\begin{equation}
{\bf c} = e^{15i} \left[ \begin{matrix}
3.1 \\
9.9 \\
6.0 \\
2.8 \\
17 
\end{matrix}\right],\quad {\bf z} = 2 * 10^{-5} \left[ \begin{matrix}
-208-2 \pi 1379 i \\
-208-2 \pi 1379 i \\
-256-2 \pi 685i \\
-256-2 \pi 685i \\
-197-2 \pi 271i \\
\end{matrix}\right].
\end{equation} 

The errors obtained in absence as in presence of noisy are reported in Table \ref{table_5zeros_2double_exact} and \ref{table_5zeros_2double_noise}, respectively. Both tables show that, also in the case where the estimate of $M$ is largely inaccurate, we obtain acceptable results for moderately high values of $N$.

\begin{table}[!h]
\centering
\begin{tabular}{c|c|ccc}
$N$ & $\widehat{M}$ & $e(\bf{f})$ & $e(\bf{c})$ & $e(\bf{h})$\\
\hline
5   & 5  & 2.07e-02 & 1.05e+00 & 1.26e-02 \\
10  & 10 & 3.98e-03 & 2.08e-01 & 1.60e-03\\
15  & 10 & 2.51e-03 & 1.33e-01 & 5.58e-04\\
20  & 10 & 1.22e-03 & 6.47e-02 & 1.13e-04 \\
50  & 10 & 2.54e-04 & 2.20e-02 & 1.23e-03\\
\hline
\end{tabular}
\caption{Error estimates   with exact data for Example 4 \label{table_5zeros_2double_exact}}
\end{table}

\begin{table}[!h]
\centering
\begin{tabular}{c|c|c|ccc}
$N$ & $\delta$ & $\widehat{M}$ & $e(\bf{f})$ & $e(\bf{c})$ & $e(\bf{h})$\\
\hline
5   & $10^{-9}$ & 5 & 5.17e-01 & 8.95+00 & 8.37e-01 \\
10  & $10^{-9}$ & 10 & 3.96e-02 & 5.57e+00 &9.07e-02 \\
15  & $10^{-9}$ & 10 & 1.16e-02 &  9.22e-01 & 3.94e-03\\ 
20  & $10^{-9}$ & 10 & 5.12e-03 & 2.90e-01 & 5.03e-04\\
50  & $10^{-9}$ & 10 & 5.62e-04 &  5.27e-02 & 1.81e-03\\
\hline
\end{tabular}

\caption{Error estimates with  noisy  data for Example 4 \label{table_5zeros_2double_noise}}
\end{table}
      
\subsection{Example 5}\label{esempio6zeridouble}
Let us now return to the first example assuming that the zeros $z_1=0.9856-0.1628i$ and $z_2=0.8976-0.4305i$ are double and the zeros $z_3=0.8127-0.5690i$ and $z_4=0.8127+0.5690i$ are simple. As we can see by our numerical results reported in Table \ref{table_6zeros_double_exact} and in Table \ref{table_6zeros_double_noise},  although two zeros are not simple and $M$, the recovering of the parameters and the sum is still accurate and improves as the number of data increases. 

\begin{table}[!h]
\centering
\begin{tabular}{c|c|ccc}
$N$ & $\widehat{M}$ &  $e(\bf{f})$ & $e(\bf{c})$ & $e(\bf{h})$\\
\hline
6   & 6  & 1.98e-04 & 2.08e-02 & 9.59e-01\\
12  & 10 & 1.73e-05 & 2.57e-03 & 6.67e-06 \\
24  & 10 & 4.08e-06 & 9.48e-04 & 6.62e-01 \\
36  & 10 & 2.79e-06 & 1.65e-03 & 1.31e-04\\
48  & 10 & 2.71e-06 & 2.81e-03 & 2.43e-04 \\
\hline
\end{tabular}
\caption{ Error estimates with exact data for Example 5 \label{table_6zeros_double_exact}}
\end{table}

\begin{table}[!h]
\centering
\begin{tabular}{c|c|c|ccc}
$N$ & $\delta$ & $\widehat{M}$ & $e(\bf{f})$ & $e(\bf{c})$ & $e(\bf{h})$\\
\hline
6   & $10^{-9}$ &  6 & 2.45e-02 & 4.52e-01 &  2.07e+00 \\
12  & $10^{-9}$ & 10 & 8.48e-04 & 9.26e-02 & 4.93e-03\\
24  & $10^{-9}$ & 10 & 6.81e-05 & 2.59e-02 & 1.58e-03 \\
36  & $10^{-9}$ & 10 & 1.88e-05 & 1.28e-02 & 1.18e-03\\
48  & $10^{-9}$ & 10 & 1.03e-05 & 1.26e-02 & 9.28e-04 \\
\hline
\end{tabular}
\caption{Error estimates  with  noisy  data for Example 5  \label{table_6zeros_double_noise}}
\end{table}

\subsection{Example 6} \label{Cerchi} 
In this example we consider the identification of the $\{z_j\}$ and $\{c_j\}$ in the sum 
$$h(x)=\sum_{j=1}^{M}c_j z_j^x.$$
It generalizes the example considered in \cite{Potts2011} where $M=30$. In our numerical results we considered $M=40$ and, as in \cite{Potts2011}, the $c_j$ coefficients 
as random values on $[0,1]$ and the $z_j$ values as equidistant nodes on three circles having radius $r=0.7,0.8,0.9$. The results are reported in Figure \ref{Cerchi}, where the exact nodes are depicted as circles and their recovery by stars on the left for the exact data and on the right for inexact data. The figure shows that the collection of $z_j$ is very accurate in absence of noise and reliable in presence of noise and comparatively more accurate with respect to that one reported in \cite[Figure 1]{Potts2011}.
The error estimates for the coefficients $\{c_j\}$ and $h(x)$ are given in Table \ref{table_cerchi} and \ref{table_cerchi_noise} for exact and noisy data. 

\begin{figure}[!t]
\centering
\includegraphics[scale=0.4]{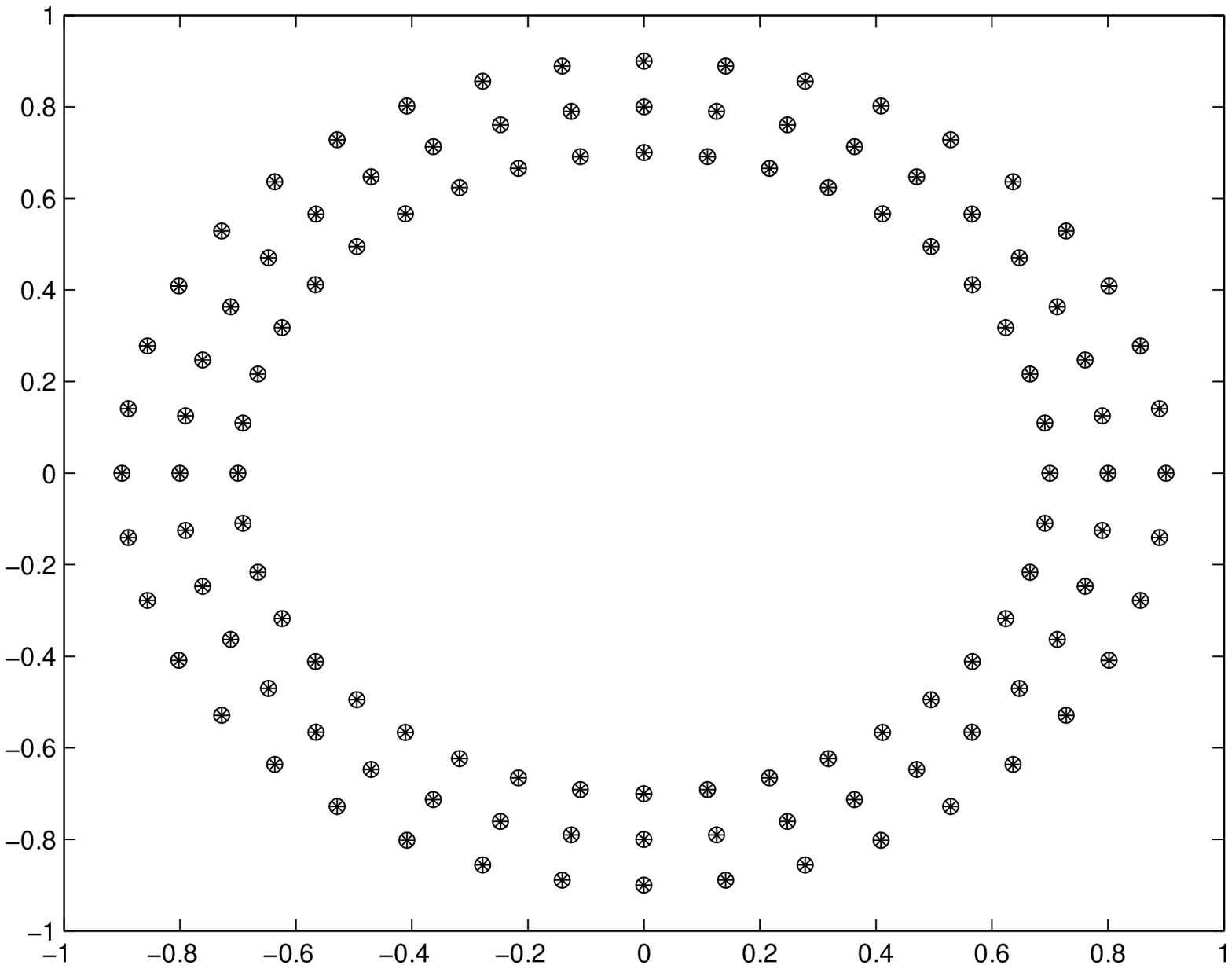} 
\includegraphics[scale=0.4]{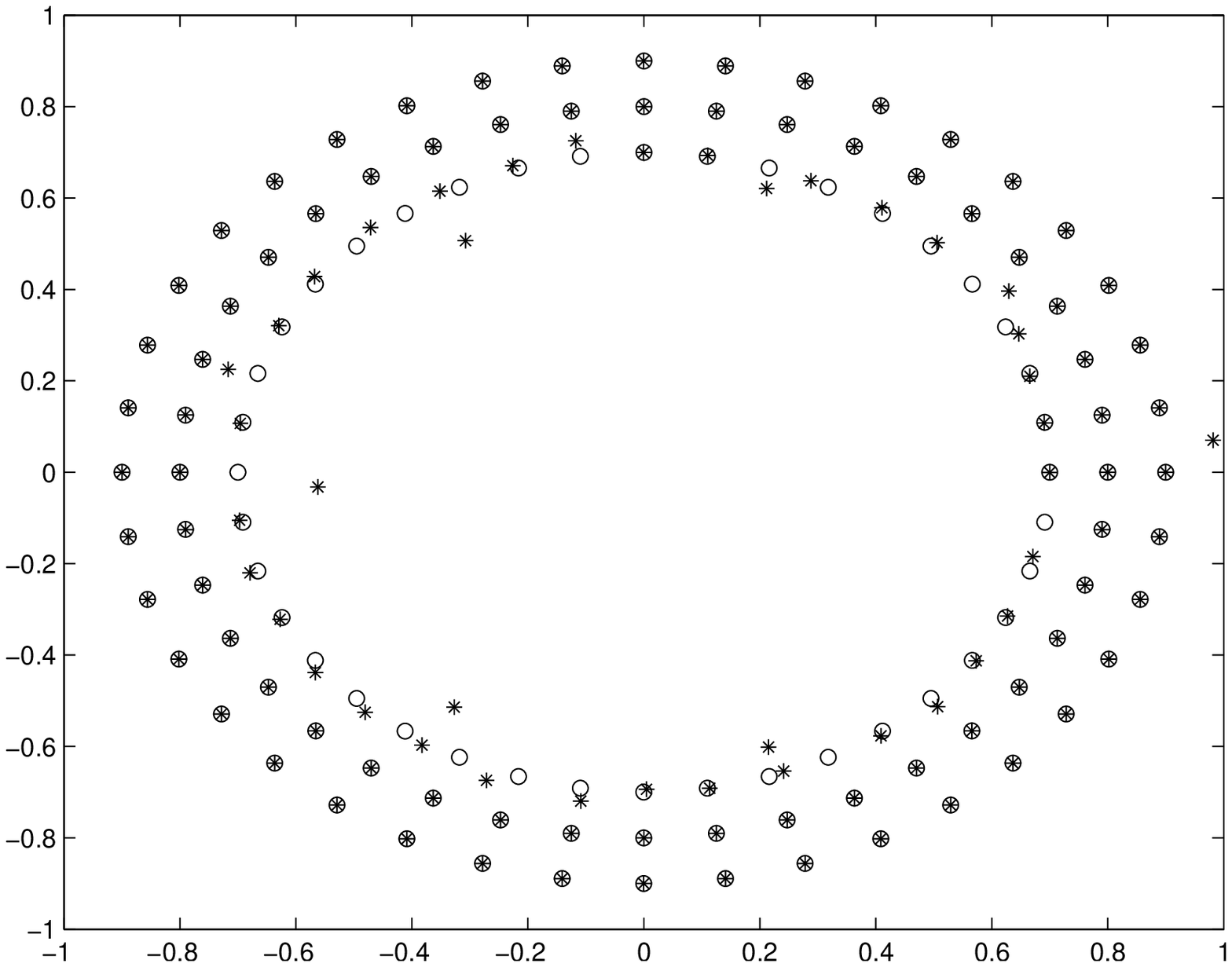} 
\caption{Graphic representation of the nodes of Example 6  for exact data (to the left) and for noisy data with  $\delta=10^{-11}$ (to the right)}
\end{figure}
 
\begin{table}[!h]
\centering
\begin{tabular}{c|c|c|cc}
${\rm{radius}}$ & $N$ & $\widehat{M}$& $e({\bf{c}})$ & $e({\bf{h}})$ \\
\hline 
0.7 & 40  & 40 & 7.012727700027700e-09 & 6.034580367199850e-09\\
0.8 & 40 & 40 &1.215316907382198e-10 & 3.953506430485780e-11 \\
0.9 & 40  & 40 & 1.008568828603852e-11  &  1.766098621871178e-12 \\
\hline 
\end{tabular}
\caption{Error estimates  with exact data for Example 6  \label{table_cerchi}}
\end{table}

\begin{table}[!h]
\centering
\begin{tabular}{c|c|c|cc}
${\rm{radius}}$ & $N$ &  $\widehat{M}$ & $e({\bf{c}})$ & $e({\bf{h}})$ \\
\hline 
0.7 & 40  & 40 & 2.467964763571590e+000 & 2.322316566811376e-002\\
0.8 & 40 & 40 & 5.512014563660308e-003 & 1.484741750804834e-005 \\
0.9 & 40 & 40 & 1.041221887751012e-006  &  1.231439422885003e-007 \\
\hline 
\end{tabular}
\caption{Error estimates with noisy data for Example 6 \label{table_cerchi_noise}}
\end{table}

\subsection{An application to non-linear partial differential equations of integrable type}\label{applications}
An extensive area where  effective methods for parameter identification  in sums of monomial-exponential functions can be very useful is represented by the important class of non-linear partial differential equations (NPDEs) of integrable type. In this context  the non-linear Schr\"odinger equation (NLS), which governs the signal transmission in optical fibers \cite{HasMat2003}, plays a special role.  

The main characteristic of this class is the fact that any initial value problem associated to an NPDE of integrable kind can theoretically be solved by using the inverse scattering transform technique (IST). This technique is primarily based on the solution of a direct scattering problem  and then on the solution of an inverse scattering problem, starting from the spectral data previously obtained by time evolution. From the numerical point of view, the first one is actually the most challenging, at least for the NLS, since the second one can be solved by using the numerical method proposed in \cite{ArRoSe2011}.   

The numerical solution of the direct scattering problem for the NLS is primarily  based on the computation of the initial Marchenko kernels from the left and from the right, respectively \cite{VanDerMee2013}.

These kernels, whenever the solution of the NLS is represented by one soliton as well as by a multisoliton (the so-called reflectionless case), can be represented as follows

\begin{align}
\Omega_\ell(x)=\sum_{j=1}^n e^{-a_j x} \sum_{s=0}^{m_j-1} (\Gamma_\ell)_{js} \frac{x^s}{s!}, \quad x \in \RR^+, \label{Omegal}\\
\Omega_r(x)=\sum_{j=1}^n e^{a_j x} \sum_{s=0}^{m_j-1} (\Gamma_r)_{js} \frac{x^s}{s!}, \quad x \in \RR^-, \label{Omegar}
\end{align}
where $0^0 \equiv 1$ and $a_j$ are complex or real parameters with $Re(a_j)>0$. 

The application of our method to $\Omega_\ell$ allows us to estimate  $\{n,m_j,(\Gamma_\ell)_{js} \}$, knowing $\Omega_\ell$ in $2N$ ($N>M$) positive integer points, and then, to recover  $(\Gamma_r)_{js}$ by solving, in the least squares sense, a linear system of order $N \times M$, given  $\Omega_r$ in $2N$ ($N>M$)  negative integer nodes. The same results can of course be obtained by applying first the method to $\Omega_r(x)$ to identify $\{n,m_j,(\Gamma_r)_{js}\}$ and then to $\Omega_\ell(x)$ to identify $(\Gamma_\ell)_{js}$.

In Tables \ref{table_soliton_simple} and \ref{table_soliton_multiple} we give the error estimates that we obtain in the identification of $\Omega_\ell$ parameters and coefficients  in the following two cases (representative of four-solitons with 4 simple bound states and with a double an two simple bound states):
\begin{enumerate}
\item[(a)] $n=4$, \quad  $m_1=\ldots=m_4=1$, \\ ${\bf{a}}=\frac{1}{10}[1+7i,1.2+3i,1.4+6i,3+1.6i]$ \quad and \quad ${\bf{\Gamma_\ell}}=[1+i, 2+i, 3+i, 4+i]$; \\
\item[(b)] $n=3$, \quad $m_1=2$,\quad  $m_2=m_3=1$, \\ ${\bf{a}}=\frac{1}{10}[1+7i,1.4+6i,3+1.6i]$ \quad and  ${\bf{\Gamma_\ell}}=[1+i, 2+i, 3+i, 4+i]$.
\end{enumerate} 
In both cases we considered $[0,\,5]$ as interval of effective interest and then we assumed $b=5$.

\begin{table}[!h]
\centering
\begin{tabular}{c|c|c|ccc}
$N$ & $\delta$ &$\widehat{M}$ & $e(\bf{f})$ & $e(\bf{c})$ & $e(\bf{h})$\\
4 & 0 & 4 & 1.02e-10 & 1.28e-09  & 4.76e-15 \\
8 & 0 & 7 & 1.33e-11 & 1.58e-10 & 1.08e-14\\
16 & 0 & 7 & 9.90e-14 & 1.11e-12 &  3.24e-15 \\
32 & 0 & 7 & 5.86e-13 & 7.15e-12 & 3.44e-15\\
64 & 0 & 7 & 7.33e-13 & 9.63e-12 & 4.43e-15\\
\hline
4 & $10^{-9}$ & 4 & 7.13e-05 & 9.83e-04 & 2.48e-09  \\
8 & $10^{-9}$ & 7 & 2.70e-07   &  3.44e-06  &   2.85e-10\\
16 &  $10^{-9}$ & 7 & 8.14e-08  &   1.01e-06  &   2.02e-09\\
32 &  $10^{-9}$ & 7 & 5.79e-09  &   9.85e-08  &   3.69e-10\\
64 & $10^{-9}$ & 7 & 2.44e-08  &   3.82e-07  &   4.83e-10\\
\hline 
4 & $10^{-7}$ & 4 & 4.56e-03   &  6.41e-02  &   9.25e-08\\
8 & $10^{-7}$ & 7 & 3.32e-05  &   4.63e-04 &    4.32e-08\\
16 &  $10^{-7}$ & 7 & 7.33e-06  &   1.17e-04  &   1.21e-07\\
32 & $10^{-7}$ & 7 & 1.13e-06   &  1.89e-05  &   6.06e-08\\
64 & $10^{-7}$ & 7 &  1.79e-06  &   2.43e-05  &   4.60e-08\\
\hline
\end{tabular}
\caption{Error estimates in the multisolitons case $(a)$ \label{table_soliton_simple}}
\end{table}      
 
\begin{table}[!h]
\centering
\begin{tabular}{c|c|c|ccc}
$N$ & $\delta$ &$\widehat{M}$ & $e(\bf{f})$ & $e(\bf{c})$ & $e(\bf{h})$\\
4 & 0 & 4  & 5.13e-06  & 5.43e-04 & 4.90e-08   \\
8 & 0 & 7  & 1.49e-06   & 1.76e-04  &   1.66e-07 \\
16 & 0 & 7 & 4.85e-07 &  7.14e-05 &  2.63e-07 \\
32 & 0 & 7 & 3.18e-07 & 5.34e-05 & 3.06e-07 \\
64 & 0 & 7 & 3.38e-07 & 5.70e-05 & 3.29e-07\\
\hline
4 & $10^{-9}$ & 4 & 3.17e-04   &  5.38e-02  &   3.09e-04\\
8 & $10^{-9}$ & 7 &  2.45e-04   &  2.91e-02  &   2.73e-05\\
16 &  $10^{-9}$ & 7 & 4.04e-05   &  5.96e-03  &   2.20e-05 \\
32 &  $10^{-9}$ & 7 &  2.49e-05   &  4.19e-03  &   2.40e-05 \\
64 &  $10^{-9}$ & 7 & 4.02e-05  &   6.78e-03   &  3.92e-05\\
\hline 
4 & $10^{-7}$ & 4 & 2.44e-02  &   2.25e+00  &   2.17e-04 \\
8 & $10^{-7}$ & 7 & 3.44e-03   &  2.95e-01  &   3.82e-04 \\
16 & $10^{-7}$ & 7 & 8.83e-04  &   1.29e-01  &   4.81e-04\\
32  &  $10^{-7}$ & 7 & 3.41e-04  &   5.76e-02 &    3.28e-04 \\
64 &  $10^{-7}$ & 7 & 3.17e-04  &   5.38e-02  &   3.09e-04  \\
\hline
\end{tabular}
\caption{Error estimates in the multisolitons case $(b)$ \label{table_soliton_multiple} }
\end{table}   

Table \ref{table_soliton_simple} highlights that the identification of parameters and coefficients is at all satisfactory in case (a). Table \ref{table_soliton_multiple} 
shows that the situation is more complex if there are multiple bound states (case (b)) as people working in the NPDEs area of integrable type know well. Nevertheless, the results that we obtain are very good in the absence of noise and reliable in the presence of noise, also when $M$ is not known in advance.

\section{Conclusions}\label{conclusions}
The results of our extensive experimentation show that the method allows us to estimate with good precision the parameters and the coefficients of a monomial-exponential sum, even if its number of terms it is not known, provided it is a reasonable overestimation. The method furnishes very accurate results in the absence of noise and acceptable results in the presence of moderately high level of noise, whenever a relatively high number of data, with respect to the number of parameters and coefficients to identify, is available. Finally, we point out that the method, without any algorithmic variant, gives good results even if some parameters correspond to multiple zeros of the polynomial of Prony.  

\section*{Acknowledgments}
The research was partially supported by the Italian Ministery of Education and Research (MIUR) under PRIN grant No. 2006017542-003, by INDAM, and by Autonomous Region of Sardinia under grant L.R.7/2007 ``Promozione della Regione Scientifica e della Innovazione Tecnologica in Sardegna''.

\bibliographystyle{plain}
\bibliography{biblio}
\end{document}